\DeclareFontFamily{OT1}{rsfs}{}
\DeclareFontShape{OT1}{rsfs}{n}{it}{<-> rsfs10}{}
\DeclareMathAlphabet{\curly}{OT1}{rsfs}{n}{it}
\newcommand{\eqnum}{\refstepcounter{equation}\textup{\tagform@{\theequation}}}
\newcommand{\be}{\begin{equation}}
\newcommand{\ee}{\end{equation}}
\newcommand{\benn}{\begin{equation*}}
\newcommand{\eenn}{\end{equation*}}
\newcommand{\bea}{\begin{eqnarray}}
\newcommand{\eea}{\end{eqnarray}}
\newcommand{\ba}{\begin{align}}
\newcommand{\ea}{\end{align}}
\newcommand{\bi}{\begin{itemize}}
\newcommand{\ei}{\end{itemize}}
\newcommand{\mc}{\mathcal}
\newcommand{\mbb}{\mathbb}
\newcommand{\D}{D}
\newcommand{\cO}{\mathcal O}
\newcommand{\mcL}{\mathcal L}
\newcommand{\irrdivcoeff}{\beta}
\newcommand{\shfdiv}[1]{\tilde{#1}}
\newcommand\reallywidetilde[1]{\ThisStyle{%
  \setbox0=\hbox{$\SavedStyle#1$}%
  \stackengine{-.1\LMpt}{$\SavedStyle#1$}{%
    \stretchto{\scaleto{\SavedStyle\mkern.2mu\AC}{.5150\wd0}}{.6\ht0}%
  }{O}{c}{F}{T}{S}%
}}
\newcommand{\ind}{\chi}
\newcommand{\ceil}[1]{\mathrm{ceil}\left(#1\right)}
\makeatletter \@addtoreset{equation}{section} \makeatother
\renewcommand{\theequation}{\thesection.\arabic{equation}}
\newtheorem{defn}[equation]{Definition}
\newtheorem{thm}[equation]{Theorem}
\newtheorem{thm*}{Theorem}
\newtheorem{lem}[equation]{Lemma}
\newtheorem{prop*}{Proposition}
\newenvironment{rmk}{\noindent\textbf{Remark}.}{\medskip}
\newtheorem{crl}[equation]{Corollary}
\title[Topological Formulae for Line Bundle Cohomology on Surfaces]{Topological Formulae for the Zeroth Cohomology of Line Bundles on del Pezzo and Hirzebruch Surfaces}
\author[Callum R. Brodie, Andrei Constantin, Rehan Deen and Andre Lukas]{Callum R. Brodie, Andrei Constantin, \\[4pt] Rehan Deen and Andre Lukas}
\begin{document}
\maketitle
\begin{abstract} We show that the zeroth cohomology of effective line bundles on del Pezzo and Hirzebruch surfaces can always be computed in terms of a topological index. 

%
\end{abstract}

\renewcommand\contentsname{\vspace{-9mm}}
\tableofcontents 


AMS Mathematics Subject Classification numbers: 55N30, 19L10.

Keywords: line bundle cohomology, del Pezzo surfaces, Hirzebruch surfaces.

\section{Introduction and summary of results} \label{intro}
It is usually hard to compute the cohomology of a holomorphic line bundle $L$ over a complex manifold $X$. In the  situation when $L$ is ample and $X$ a smooth complex projective variety, Kodaira's vanishing theorem ensures that all cohomology groups $H^i(K_X\otimes L)$ with indices $i>0$ are automatically zero, where $K_X$ is the canonical line bundle of $X$. In this case, the dimension of the zeroth cohomology of $K_X\otimes L$ can be computed as a topological index. 

\textcolor{black}{In this paper we study certain classes of non-singular complex projective surfaces for which we show that the region in the Picard lattice where the dimension of the zeroth cohomology is given by a topological index can be extended to the entire effective cone.} 
In particular, for surfaces such as del Pezzos and toric surfaces whose fans have convex support, the statement holds true thoughout the entire Picard lattice, leading to closed form expressions for the dimension of the zeroth cohomology and hence for all higher cohomologies. 

\textcolor{black}{The validity of these results is derived from the existence of a map taking effective divisor classes to nef divisor classes while preserving the dimension of the zeroth cohomology.}
\begin{thm}\label{thm:h0invariance}
Let $\mathcal{L}\rightarrow S$ be an effective line bundle on a non-singular projective surface~$S$, and $D$ an effective divisor such that $\cO(D)=\mcL$. 
Let $\{D_i\}$ be the finite collection of irreducible negative divisors 
with $D \cdot D_i < 0$. 
 \textcolor{black}{Then the map $D\mapsto \tilde D$, where $\tilde D$ is defined by}
\begin{equation*}
\tilde D = D \,-\!\!\!\! \sum_{i:\,D\cdot D_i<0} \ceil{\frac{ D_i \cdot D } { D_i^2 }} D_i ~,
\end{equation*}
preserves the dimension of the zeroth cohomology, so that:
\begin{equation*}
h^0(S,\cO(\tilde D)) = h^0(S, \mcL) \,.
\end{equation*}
\end{thm}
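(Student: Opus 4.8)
The plan is to reduce the assertion to a sequence of single-curve subtractions, each controlled by the restriction exact sequence, and then to verify that the ceiling coefficients are calibrated so that every intermediate step remains valid no matter in which order the curves are removed. Before starting the iteration I would record two facts that use effectivity of $D$. First, if $C$ is an irreducible curve with $D\cdot C<0$ then $C$ cannot meet $D$ properly, so $C\subseteq\mathrm{Supp}(D)$; as $\mathrm{Supp}(D)$ has finitely many components, the collection $\{D_i\}$ is finite. Second, writing $D=mC+D'$ with $m\geq 1$ and $C\not\subseteq\mathrm{Supp}(D')$ gives $D\cdot C=mC^2+D'\cdot C$ with $D'\cdot C\geq 0$, which forces $C^2<0$. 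Hence every denominator $-D_i^2$ is strictly positive and each coefficient $\ceil{(-D_i\cdot D)/(-D_i^2)}\geq 1$ is well defined.

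The engine of the argument is the restriction sequence for an integral curve $C$ on the smooth surface $S$,
\[
0 \to \cO(D'-C) \to \cO(D') \to \cO(D')|_C \to 0 \,.
\]
When $D'\cdot C<0$ the line bundle $\cO(D')|_C$ has negative degree on the integral curve $C$ and therefore has no global sections, so the long exact sequence yields $h^0(S,\cO(D'-C))=h^0(S,\cO(D'))$. Thus one copy of $C$ may be stripped off without changing $h^0$ whenever the current class pairs negatively with $C$.

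It then remains to peel off all the curves, and here I would subtract the $D_i$ one copy at a time in an arbitrary order, checking that each subtraction is legitimate. Set $a_i=D\cdot D_i<0$ and $b_i=D_i^2<0$, so that $k_i:=\ceil{a_i/b_i}$ is the prescribed coefficient and $k_i-1<a_i/b_i$. The decisive observation is a monotonicity: since distinct irreducible curves satisfy $D_i\cdot D_j\geq 0$, removing copies of the other curves can only decrease the pairing with $D_i$. Concretely, if $m_j$ copies of each $D_j$ have been removed so far with $m_i\leq k_i-1$, then
\[
\Big(D-\sum_j m_j D_j\Big)\cdot D_i \;=\; a_i - m_i b_i - \sum_{j\neq i} m_j\,(D_i\cdot D_j) \;\leq\; a_i - m_i b_i \;<\; 0 \,,
\]
the final inequality following from $m_i\leq k_i-1<a_i/b_i$ together with $b_i<0$. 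Hence each of the $\sum_i k_i$ successive subtractions meets the hypothesis of the restriction argument and preserves $h^0$, and after all of them one lands exactly on $\tilde D=D-\sum_i k_i D_i$, giving $h^0(S,\cO(\tilde D))=h^0(S,\mcL)$.

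The main obstacle is precisely this interaction between different negative curves: a priori, removing one curve alters the intersection numbers with the others, so it is not clear that the per-curve counts computed from the original $D$ remain sufficient. The displayed monotonicity estimate is what resolves this, showing that the cross terms $D_i\cdot D_j\geq 0$ only help and that the order of removal is immaterial; everything else reduces to the elementary vanishing of sections of a negative-degree line bundle on an integral curve.
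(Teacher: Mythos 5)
Your proof is correct, but it takes a genuinely different route from the paper's. The paper argues at the level of linear systems: it writes any member of $|D|$ as $\sum_a \beta_a D_a$ with $\beta_a\geq 0$, extracts the bound $\beta_i\geq \ceil{-D\cdot D_i/-D_i^2}$ from $0>D\cdot D_i\geq \beta_i D_i^2$, concludes that the whole divisor $F=\sum_i\ceil{-D_i\cdot D/-D_i^2}D_i$ is a fixed component of $|D|$, and then invokes the standard fact that removing a fixed component leaves $\dim|D|$ unchanged. You instead peel off one integral curve at a time using the restriction sequence $0\to\cO(D'-C)\to\cO(D')\to\cO(D')|_C\to 0$ and the vanishing of sections of a negative-degree line bundle on an integral curve, with the monotonicity estimate (resting on $D_i\cdot D_j\geq 0$ for distinct irreducible curves) guaranteeing that the current class still meets $D_i$ negatively as long as fewer than $k_i$ copies have been removed. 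Both arguments hinge on the same numerical input, but yours is more self-contained --- it does not presuppose the fixed-component dimension identity, and it makes explicit why the full ceiling coefficient can be subtracted in a single pass (the cross terms only help) --- at the cost of being longer; the paper's is shorter and identifies $F$ in one stroke, which matches how the transform is packaged in Definition~\ref{def:itransform}. Your preliminary observations (finiteness of $\{D_i\}$ and $D_i^2<0$) are correct and are worth recording, since the paper takes them as implicit in the phrase ``irreducible negative divisors.''
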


\textcolor{black}{The map $D\mapsto\tilde D$ can be discussed in terms of the Zariski decomposition of effective divisors and its properties. However, we prefer a more elementary treatment, which is sufficient for the present purpose.} The idea of the proof relies on the basic observation that a fixed component $F$ of the complete linear system $|D|$, i.e.~a divisor contained in every element of $|D|$, does not contribute to the dimension count and hence ${\rm dim}|D-F| = {\rm dim}|D|$ or equivalently, 
\begin{equation*}
h^0(S,\cO(D-F)) = h^0(S, \cO(D)) \,.
\end{equation*}

While in general it may be hard to extract the entire fixed part of a linear system, some of the fixed components are immediate: if $[D]\cdot [D_i]<0$, then $D_i$ must be contained in every element of $|D|$, otherwise there exists $D'\in |D|$ such that $D'\cdot D_i\geq 0$, which is a contradiction. 
Theorem~\ref{thm:h0invariance} gives a lower bound on the number of times that a negative divisor is contained in the base locus of $|D|$. 

\textcolor{black}{We call the map $D\mapsto\tilde D$ an isoparametric transform, a terminology reminiscent of the notion of parameters in a family of curves.}
This map eliminates some of the fixed components of $|D|$. In general, it changes the class $[\tilde D]\neq[D]$ and hence the corresponding line bundle $\cO(\tilde D)\neq \cO(D)$, while preserving the dimension of the complete linear system, ${\rm dim} |\tilde D| = {\rm dim} |D|$. 

\begin{thm}\label{thm:nef}
After a finite number of iterations, the isoparametric transform outputs in the nef cone. Subsequently, it remains constant.  
\end{thm}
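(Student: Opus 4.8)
The plan is to split the statement into its two halves and treat them with different tools: the assertion that the transform \emph{remains constant} once in the nef cone is essentially a fixed-point characterization, while the assertion that the nef cone is \emph{reached in finitely many steps} requires a termination argument based on a strictly decreasing integer-valued quantity. Throughout I would use that, by Theorem~\ref{thm:h0invariance}, every iterate satisfies $h^0 > 0$ and is therefore itself effective, so the hypotheses of the transform persist along the whole sequence.

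For the ``remains constant'' part I would first record the standard characterization of fixed points. If $D$ is nef then $D \cdot C \geq 0$ for every irreducible curve $C$, so the index set $\{i : D \cdot D_i < 0\}$ is empty, the defining sum is empty, and $\tilde D = D$. Conversely, for effective $D$ the index set is empty precisely when $D$ is nef: given any irreducible curve $C$ with $D \cdot C < 0$, writing $D = mC + D'$ with $C \not\subseteq \mathrm{Supp}(D')$ yields $D \cdot C = m C^2 + D' \cdot C$ with $D' \cdot C \geq 0$, which forces $m > 0$ and $C^2 < 0$. Thus the only curves meeting an effective $D$ negatively are negative curves, which are exactly the $D_i$; hence an empty index set is equivalent to nefness, and once an iterate lands in the nef cone the transform acts as the identity.

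The termination half is where I expect the main obstacle. The naive monotone quantity, namely the number of negative curves $D_i$ with $D \cdot D_i < 0$, does \emph{not} work: subtracting the multiples $k_i D_i$ can make $\tilde D \cdot D_j$ negative for some curve $D_j$ that previously met $D$ non-negatively, so the count of offending curves need not decrease. The fix is to replace this count by a genuinely numerical quantity. Fixing an ample divisor $H$, I would observe that each coefficient $\ceil{\frac{-D_i \cdot D}{-D_i^2}}$ is a positive integer (its argument is a positive rational because $D \cdot D_i < 0$ and $D_i^2 < 0$), and that each $D_i$ is a nonzero effective curve, so $D_i \cdot H > 0$. Consequently, at every nontrivial step
\[
\tilde D \cdot H \;=\; D \cdot H \;-\; \sum_{i:\,D\cdot D_i<0} \ceil{\frac{-D_i \cdot D}{-D_i^2}}\,(D_i \cdot H) \;<\; D \cdot H .
\]

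To conclude, I would note that since every iterate is effective and $H$ is ample, the degrees $D^{(n)} \cdot H$ are non-negative integers. They therefore form a strictly decreasing sequence of non-negative integers along nontrivial iterations, which must stabilize after finitely many steps. At stabilization the index set is empty, so by the characterization above the resulting divisor is nef; and by the fixed-point statement the transform then acts trivially forever after. This establishes both that the isoparametric transform outputs in the nef cone after finitely many iterations and that it remains constant thereafter.
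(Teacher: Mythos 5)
Your proof is correct, and your termination argument takes a genuinely different route from the paper's. The paper disposes of Theorem~\ref{thm:nef} in three sentences: each nontrivial iteration strips off a nonzero piece of the fixed part of the original linear system $|D|$, and since that fixed part is a single effective divisor of finite total multiplicity, the process must halt; it halts exactly when no curve meets the iterate negatively, i.e.\ when the iterate is nef. Your replacement of this ``shrinking fixed part'' by the strictly decreasing sequence of non-negative integers $D^{(n)}\cdot H$ for a fixed ample $H$ is cleaner in one respect: it does not require tracking that the successively subtracted divisors all nest inside the fixed part of the original $|D|$ (which the paper uses implicitly), only that each iterate is effective and that each subtracted divisor is nonzero effective, hence of positive $H$-degree. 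You also make explicit two points the paper glosses over: that the transform is the identity precisely on nef classes --- your observation that an effective divisor can only meet negatively a negative curve contained in its own support, so an empty index set forces nefness against \emph{all} irreducible curves and not just the negative ones --- and that effectivity propagates along the iteration via the preservation of $h^0$. Both arguments are valid; yours is more self-contained and quantitative (it even bounds the number of iterations by $D\cdot H$), while the paper's is shorter and more geometric.
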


\medskip\noindent\textbf{Topological formulae for the zeroth cohomology  (general case).}
The nef cone always contains a region where the zeroth cohomology equals the index, though the exact expanse of this region depends on the particulars of the surface in question. In general, if $S$ is a non-singular complex projective surface, Kodaira's vanishing theorem ensures that this region includes
\be\label{indexRegion}
 ({\rm Amp}(S)+K_S)\,\cap\,{\rm Nef}(S) \,\cap\, {\rm Pic}(S)~,
\ee
containing the lattice points in a shifted cone of full-dimension.

If the limit $\underline{\tilde D}$ of the iterated isoparametric transform belongs to this region, the following index-like formula for the zeroth cohomology holds 
\begin{equation}\label{IndexFormula}
h^0(S, \cO(D)) = \chi (S, \cO(\underline{\tilde D}))~, 
\end{equation}
relating an analytical quantity, the zeroth cohomology of $\cO(D)$ to a purely topological quantity, the Euler characteristic of $\cO(\underline{\tilde D})$.

\medskip\noindent\textbf{Topological formulae on other surfaces.} For particular surfaces, stronger vanishing theorems can enlarge the region~\eqref{indexRegion} to the entire nef cone intersected with the Picard lattice. 

\begin{thm}[Kawamata-Viehweg vanishing theorem for surfaces]
\label{thm:kawamataviehweg}
Let $S$ be a smooth complex projective surface, and let $D$ be a nef and big divisor on $S$. Then
\benn
h^i\left(S,\mc{O}_S(K_S+D)\right) = 0 \quad \mathrm{for}~i>0 \,.
\eenn
\end{thm}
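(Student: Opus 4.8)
The plan is to exploit that $S$ has dimension two, reducing the statement to the groups $i=1$ and $i=2$ and dispatching these by different routes. Since $\dim S = 2$, one has $h^i=0$ automatically for $i>2$, and Serre duality rewrites the survivors as $h^i(S,\cO_S(K_S+D)) = h^{2-i}(S,\cO_S(-D))$, because $\cO_S(K_S)\otimes\cO_S(K_S+D)^{-1}=\cO_S(-D)$.

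For $i=2$ this amounts to showing $h^0(S,\cO_S(-D))=0$, which is immediate: a nonzero section would make $-D$ linearly equivalent to an effective divisor $E\geq 0$, giving $D\cdot E = -D^2 < 0$ since $D$ is nef and big on a surface and hence $D^2>0$; but $D$ nef and $E$ effective force $D\cdot E\geq 0$, a contradiction. So $h^2=0$ and the content lies entirely in the case $i=1$, i.e.\ $h^1(S,\cO_S(-D))=0$, classically Ramanujam's vanishing theorem.

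For this case the plan is the covering argument underlying Kawamata--Viehweg. By Kodaira's lemma, nef and big $D$ admits a $\mbb{Q}$-linear decomposition $D\equiv_{\mbb{Q}} A+\Delta$ with $A$ an ample $\mbb{Q}$-divisor and $\Delta\geq 0$ effective. I would then pass to a log resolution $\mu:S'\to S$ --- on a surface merely a finite chain of point blow-ups arranging the support of $\mu^*\Delta$ into simple normal crossings --- and construct a degree-$N$ cyclic cover $\pi:Y\to S'$ branched so as to clear the fractional part of $N\mu^*\Delta$. The vanishing on $S'$ of the round-up sheaf $\cO_{S'}(K_{S'}+\lceil \mu^*D\rceil)$ then follows from the degeneration of the logarithmic Hodge-to-de Rham spectral sequence on $Y$ (the Esnault--Viehweg input), read off from the eigensheaf splitting of $\pi_*\omega_Y$.

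I expect the genuine obstacle to be exactly the step that separates this theorem from Kodaira's: the birational pullback $\mu^*A$ is only nef and big, not ample, so one cannot simply invoke Kodaira vanishing on $S'$ --- the positivity needed to run a Hodge-theoretic vanishing has been destroyed by the resolution, and the covering construction exists precisely to reinstate it. The crux is to carry this out compatibly with the ceiling/floor arithmetic of $\mu^*\Delta$ and the discrepancy formula $K_{S'}=\mu^*K_S+\sum a_i E_i$, so that $\mu_*\cO_{S'}(K_{S'}+\lceil \mu^*D\rceil)=\cO_S(K_S+D)$ with vanishing higher direct images; once that bookkeeping is secured, the Leray spectral sequence transfers the vanishing back from $S'$ to $S$. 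As a cross-check I would confirm that on surfaces this reproduces Ramanujam's original, more topological derivation.
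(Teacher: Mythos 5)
The paper offers no proof of this statement: Theorem~\ref{thm:kawamataviehweg} is quoted as a classical vanishing result (the standard reference being Lazarsfeld's \emph{Positivity in algebraic geometry}, cited in the bibliography), so there is no in-paper argument to compare yours against. Judged on its own terms, your outline is the standard proof and its skeleton is sound. The reduction by Serre duality to $h^{2-i}(S,\mc{O}_S(-D))$ is correct, and your disposal of $i=2$ is complete: if $-D$ were linearly equivalent to an effective $E$, then $D\cdot E=-D^2<0$ (bigness of a nef divisor on a surface giving $D^2>0$) contradicts $D\cdot E\geq 0$ from nefness, and this covers $E=0$ as well. The surviving case $h^1(S,\mc{O}_S(-D))=0$ is indeed Ramanujam's vanishing, and your route --- Kodaira's lemma $D\equiv_{\mbb{Q}}A+\Delta$, a log resolution to put $\Delta$ in simple normal crossings, the cyclic covering trick, and the Esnault--Viehweg degeneration argument --- is the textbook Kawamata--Viehweg proof; you correctly locate both the crux (ampleness is lost under $\mu^*$ and must be restored by perturbing $\mu^*A$ by small multiples of the exceptional divisors, absorbing the error into the fractional part before rounding up) and the bookkeeping needed to descend ($\mu_*\mc{O}_{S'}(K_{S'}+\mu^*D)=\mc{O}_S(K_S+D)$ with vanishing higher direct images, then Leray). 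What you have written is a plan rather than a proof --- the covering construction and the Hodge-theoretic input are invoked, not executed --- but every invoked ingredient is standard and correctly deployed, so there is no gap in the strategy; for the purposes of this paper the theorem is in any case being used as a black box.
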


\begin{crl}\label{crl:KV}
If the limit $\underline{\tilde D}$ of the isoparametric transform applied to an effective divisor $D$ is such that $\underline{\tilde D} - K_S$ is nef and big, the topological formula~\eqref{IndexFormula} follows by Kawamata-Viehweg. In particular, this is always the case for surfaces with nef and big anti-canonical divisor class, i.e.~$-K_S\cdot C\geq 0$ for all curves $C$ on $S$ and $(-K_S)^2>0$, which includes del Pezzo surfaces.
\end{crl}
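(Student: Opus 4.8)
The plan is to assemble the results already stated into a short chain of (in)equalities, with the only genuine computation being a self-intersection estimate. First I would record the output of the iterated transform. Applying Theorem~\ref{thm:h0invariance} at each step of the iteration preserves the dimension of the zeroth cohomology, so $h^0(S,\cO(D)) = h^0(S,\cO(\underline{\tilde D}))$; and by Theorem~\ref{thm:nef} the limit $\underline{\tilde D}$ lies in the nef cone. The hypothesis of the corollary supplies the remaining ingredient: the class $M := \underline{\tilde D} - K_S$ is nef and big.

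Next I would apply Kawamata--Viehweg (Theorem~\ref{thm:kawamataviehweg}) to $M$. Writing $\cO(\underline{\tilde D}) = \cO(K_S + M)$ with $M$ nef and big, the theorem gives $h^i(S,\cO(\underline{\tilde D})) = 0$ for all $i>0$, so the Euler characteristic collapses to its zeroth term,
\benn
\chi(S,\cO(\underline{\tilde D})) = \sum_{i\ge 0}(-1)^i\, h^i(S,\cO(\underline{\tilde D})) = h^0(S,\cO(\underline{\tilde D})) = h^0(S,\cO(D)),
\eenn
which is exactly the formula~\eqref{IndexFormula}. This settles the first assertion.

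For the ``in particular'' clause I would show that the hypothesis on $M$ is automatic once $-K_S$ is nef and big. Nefness is immediate: $\underline{\tilde D}$ is nef by Theorem~\ref{thm:nef} and $-K_S$ is nef by assumption, and the nef cone is closed under addition, so $M = \underline{\tilde D} + (-K_S)$ is nef. For bigness I would invoke the surface criterion that a nef divisor is big precisely when its self-intersection is strictly positive, and expand
\benn
M^2 = \underline{\tilde D}^2 + 2\,\underline{\tilde D}\cdot(-K_S) + (-K_S)^2.
\eenn
Here $\underline{\tilde D}^2 \ge 0$ since $\underline{\tilde D}$ is nef, $\underline{\tilde D}\cdot(-K_S) \ge 0$ since both factors are nef, and $(-K_S)^2 > 0$ by hypothesis, so $M^2 > 0$ and $M$ is big. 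Del Pezzo surfaces fall into this class because $-K_S$ is ample, hence nef and big.

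The argument is essentially bookkeeping, so I do not expect a serious obstacle; the two places requiring care are the appeal to the surface-specific equivalence that a nef class is big if and only if its self-intersection is positive (used in the bigness step), and the observation that the vanishing of every higher cohomology is precisely what identifies $\chi(S,\cO(\underline{\tilde D}))$ with $h^0(S,\cO(\underline{\tilde D}))$.
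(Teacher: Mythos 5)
Your proposal is correct and follows essentially the same route as the paper: apply Kawamata--Viehweg to $\underline{\tilde D}-K_S$ to kill the higher cohomologies so that $\chi$ collapses to $h^0$, then for the ``in particular'' clause deduce nefness of $\underline{\tilde D}-K_S$ from closure of the nef cone under addition and bigness from the expansion of $(\underline{\tilde D}-K_S)^2$ term by term. If anything, your write-up is slightly more careful than the paper's, which leaves the nonnegativity of $\underline{\tilde D}^2$ and $\underline{\tilde D}\cdot(-K_S)$ implicit.
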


Del Pezzo surfaces are discussed in detail as an example in Section~\ref{sec:Examples}. In particular, we show that the index formula~\eqref{IndexFormula} implies the following closed form expression for the zeroth cohomology of an arbitrary effective line bundle $\cO(D)$:
\benn
h^0\left(\mc{O}(D)\right) = \ind\bigg(D +\sum_{i}(D\cdot D_i)D_i \bigg) ~,
\eenn
where the index $i$ runs over the set of all irreducible negative divisors such that $D \cdot D_i < 0$. 

\vspace{12pt}
Separately, a large class of toric surfaces can be treated using the following theorem. 

\begin{thm}[Demazure vanishing theorem for surfaces]
\label{thm:demazure}
Let $S$ be a toric surface whose fan has convex support, and let $D\in {\rm Nef}(S)$. Then
\benn
h^i\left(S,\mc{O}_S(D)\right) = 0 \quad \mathrm{for}~i>0 \,.
\eenn
\end{thm}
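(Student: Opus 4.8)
The plan is to translate the statement into the combinatorial language of toric sheaf cohomology and then let convexity do the work. First I would reduce to the case where $D$ is a torus-invariant Cartier divisor, $D=\sum_\rho a_\rho D_\rho$; this is harmless because on a toric surface every divisor class has such a representative, and nefness is a numerical condition invariant under linear equivalence. The torus action then grades the cohomology by the character lattice $M$,
\be
H^i(S,\cO_S(D))=\bigoplus_{m\in M}H^i(S,\cO_S(D))_m~,
\ee
so it suffices to prove that each graded piece vanishes for $i>0$.

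Next I would invoke the standard formula that identifies each graded piece with a reduced cohomology group of a subset of the fan's support. Writing $\varphi_D$ for the support function of $D$ on $|\Sigma|\subseteq N_{\mathbb{R}}\cong\mathbb{R}^2$ (piecewise linear, with $\varphi_D(u_\rho)=-a_\rho$), one has
\be
H^i(S,\cO_S(D))_m\cong\tilde H^{i-1}\big(V_{D,m};\mathbb{C}\big)~,\qquad V_{D,m}=\{\,u\in|\Sigma|:\langle m,u\rangle<\varphi_D(u)\,\}~.
\ee
For $i=0$ this reproduces the fact that sections correspond to lattice points $m$ with $V_{D,m}=\emptyset$ (that is, $m\in P_D$); for $i>0$ the claim becomes the purely topological assertion that $\tilde H^{i-1}(V_{D,m})=0$.

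The crux is convexity. Since $D$ is nef, its support function is a minimum of the linear functionals $\langle m_\sigma,\cdot\rangle$ representing it on the maximal cones, hence concave; consequently $u\mapsto\langle m,u\rangle-\varphi_D(u)$ is convex and its strict sublevel set $\{\langle m,u\rangle<\varphi_D(u)\}$ is convex. Intersecting with $|\Sigma|$---and this is exactly where the hypothesis that the fan has convex support enters---keeps $V_{D,m}$ convex. A convex subset of the plane is either empty or contractible, so $\tilde H^{i-1}(V_{D,m})=0$ for all $i\geq 1$. Summing over $m$ then yields $H^i(S,\cO_S(D))=0$ for $i>0$.

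I expect the main obstacle to be bookkeeping rather than geometry: fixing the combinatorial cohomology formula together with its inequality conventions so that nefness genuinely becomes concavity of $\varphi_D$, and checking that this formula remains valid in the non-complete setting $|\Sigma|\neq N_{\mathbb{R}}$, which is the entire reason for assuming convex support. Once $V_{D,m}$ is known to be convex the topology is immediate---on a surface only $\tilde H^0$ and $\tilde H^1$ of planar sets ever occur---so no further estimates are needed.
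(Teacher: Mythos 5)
The paper does not prove this statement at all: it is quoted by name as a classical result (Demazure vanishing), with the standard reference being Cox--Little--Schenck, \emph{Toric Varieties}, Theorem 9.2.3. Your argument is precisely the textbook proof of that theorem and is correct: reduce to a torus-invariant representative, use the $M$-grading $H^i(S,\cO_S(D))_m\cong\tilde H^{i-1}(V_{D,m};\mathbb{C})$ with $V_{D,m}=\{u\in|\Sigma|:\langle m,u\rangle<\varphi_D(u)\}$ (valid for arbitrary, not necessarily complete, toric varieties), and observe that nefness makes $\varphi_D$ the minimum of the local linear functionals, so $V_{D,m}$ is the intersection of a convex set with the convex support $|\Sigma|$, hence empty or contractible. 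The only point worth pinning down beyond sign conventions is that the equivalence ``$D$ nef $\Leftrightarrow$ $\varphi_D$ convex (i.e.\ given by the minimum of the $\langle m_\sigma,\cdot\rangle$)'' uses that $|\Sigma|$ is convex of full dimension, which holds in the paper's application to compact toric surfaces; with that caveat your proof is complete and supplies exactly what the paper leaves to the literature.
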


\begin{crl}
Since for a compact toric surface the fan has convex support, the topological formula~\eqref{IndexFormula} applies in this case to any effective divisor $D$. 
\end{crl}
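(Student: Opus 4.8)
The plan is to combine the three structural results already in hand—$h^0$-invariance of the isoparametric transform (Theorem~\ref{thm:h0invariance}), its eventual landing and stabilization in the nef cone (Theorem~\ref{thm:nef}), and Demazure vanishing (Theorem~\ref{thm:demazure})—so that the index formula~\eqref{IndexFormula} reduces to a short chain of equalities. The only genuinely new input is the observation recorded in the corollary itself: a compact toric surface is described by a complete fan, whose support is all of $N_{\mathbb{R}}\cong\mathbb{R}^2$ and is therefore convex. Hence the hypothesis of Theorem~\ref{thm:demazure} is automatically satisfied, and Demazure vanishing applies to every nef divisor on $S$.

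First I would start from an arbitrary effective divisor $D$ with $\cO(D)=\mcL$ and iterate the isoparametric transform. By Theorem~\ref{thm:nef} the iteration reaches the nef cone after finitely many steps and then remains constant, producing a well-defined limit $\underline{\tilde D}\in\mathrm{Nef}(S)$. At each step Theorem~\ref{thm:h0invariance} keeps the dimension of the zeroth cohomology fixed, so composing the finitely many transforms yields
\[
h^0(S,\cO(D)) = h^0(S,\cO(\underline{\tilde D}))~.
\]
In particular, since $D$ is effective and $h^0$ is preserved, each intermediate divisor and hence $\underline{\tilde D}$ is again effective, so the transform is legitimately defined throughout the iteration; only the nefness of $\underline{\tilde D}$ is needed in what follows.

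Next I would apply Demazure vanishing to $\underline{\tilde D}$. Because $S$ has convex (indeed full) support and $\underline{\tilde D}\in\mathrm{Nef}(S)$, Theorem~\ref{thm:demazure} gives $h^i(S,\cO(\underline{\tilde D}))=0$ for all $i>0$. Consequently the Euler characteristic collapses onto the zeroth cohomology,
\[
\chi(S,\cO(\underline{\tilde D})) = h^0(S,\cO(\underline{\tilde D}))~,
\]
and chaining this with the previous equality gives $h^0(S,\cO(D))=\chi(S,\cO(\underline{\tilde D}))$, which is precisely~\eqref{IndexFormula}.

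There is no serious technical obstacle in this argument; the content resides entirely in the earlier theorems. The one point that deserves care is conceptual rather than computational: in the general statement the index formula required $\underline{\tilde D}$ to fall inside the Kodaira region~\eqref{indexRegion}, i.e.\ $\underline{\tilde D}-K_S$ to be ample, whereas here Demazure vanishing enlarges the region of validity to the whole of $\mathrm{Nef}(S)\cap\mathrm{Pic}(S)$. Since $\underline{\tilde D}$ is by construction always a nef lattice point, this enlarged region contains it unconditionally, which is exactly what promotes the formula from holding on a shifted subcone to holding for every effective $D$ without further hypotheses.
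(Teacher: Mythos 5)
Your argument is correct and is exactly the one the paper intends: the corollary is stated without proof precisely because it follows immediately from chaining Theorem~\ref{thm:h0invariance}, Theorem~\ref{thm:nef}, and Theorem~\ref{thm:demazure}, using completeness of the fan to guarantee convex support. Your additional remarks (effectivity of the intermediate divisors via $\tilde D = D - F$ with $F$ a fixed component, and the enlargement of the vanishing region from the shifted ample cone to all of $\mathrm{Nef}(S)\cap\mathrm{Pic}(S)$) match the paper's framework and introduce nothing extraneous.
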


As an illustration, in Section~\ref{Hirzebruch} we discuss Hirzebruch surfaces of degree~$n$ and show that the zeroth cohomology of an effective line bundle $\mc{O}(D)$ is given by
\benn
h^0\left(\mc{O}(D)\right) = \ind\bigg(D - \theta(-D \cdot D_{-n})\,\ceil{\frac{D\cdot D_{-n}}{-n}}D_{-n} \bigg) \,,
\eenn
where $D_{-n}$ is the unique irreducible divisor with negative self-intersection $D_{-n}^2=-n$ (not present for $n=0$) and 
\begin{equation*}
\theta(x) = \begin{cases}1,~~x\geq 0\\ 0,~~ x< 0\end{cases}  
\end{equation*}
is the Heaviside step-function.

\medskip\noindent\textbf{Topological formulae on K3 surfaces.} If $K_S$ is trivial, the region~\eqref{indexRegion} becomes
\benn
{\rm Amp}(S) \,\cap\, {\rm Pic}(S)~.
\eenn
This implies that the topological formula~\eqref{IndexFormula} holds for all effective line bundles except those lying on the boundary of the nef cone or preimages thereof under the isoparametric transform.

\medskip\noindent\textbf{Acknowledgements.} We would like to thank Damian R\"ossler for inspiring discussions.  The present article is complemented by two other papers by the same authors \cite{BCDL1, BCDL2}, the first dealing with physical applications and the second showing that machine learning algorithms can effectively be implemented in order to find analytic formulae for line bundle cohomology. 
Analytic formulae similar to the ones presented here have been found in other contexts, such as complete intersection Calabi-Yau threefolds \cite{Co}, \cite{BCL}, \cite{CL}, \cite{LS}, and (hypersurfaces in) toric varieties \cite{KS}.

\section{The isoparametric transform}\label{sec:proofs}

Fix a non-singular complex projective algebraic surface $S$ and let $\mathcal{L}\rightarrow S$ be an effective line bundle. Knowing the dimension of the zeroth cohomology for all effective line bundles is enough in order to infer all other cohomology dimensions. The $h^2(S,\mcL)$ cohomology is obtained via Serre duality and $h^1(S,\mcL)$ follows by the index theorem. 

The isoparametric transform, discussed below, maps effective line bundles to nef line bundles. This proves useful in the cases when the cohomology of nef line bundles can be accessed through vanishing theorems. 

\begin{defn}\label{def:itransform} 
Let $\mathcal{L}\rightarrow S$ be an effective line bundle and $D$ an effective divisor such that $\cO(D)=\mcL$. Denote by $\{D_i\}$ the finite collection of irreducible negative divisors with $D \cdot D_i < 0$. 
Then $D$ can be written uniquely as 
\begin{equation*}
D = \tilde{D} + F~,
\end{equation*}
where $\tilde D$ is effective and $F$ is a fixed component of the complete linear system of $D$ defined by
\begin{equation*}
F \,=\!\!\!\!  \sum_{i:\,D\cdot D_i<0} \ceil{\frac{ D_i \cdot D } { D_i^2 }} D_i ~.
\end{equation*}
$\tilde D$ is the isoprametric transform of $D$ and $\tilde \mcL = \cO(\tilde D)$ the isoparametric transform of $\mcL$.
\end{defn}

\begin{rmk}
The isoparametric transform is well defined as a map between line bundles, since it depends only on the class $[D]$.
\end{rmk}

Theorem \ref{thm:h0invariance} asserts the invariance of the zeroth cohomology  dimension under the transform.

\vspace{4pt}
\medskip\noindent\textbf{Proof of Theorem \ref{thm:h0invariance}.} 
For any effective divisor $D$ such that $\cO(D)=\mcL$, as $D_i$ is irreducible and negative, the condition $D\cdot D_i<0$ guarantees that $D_i$ is contained in $D$, hence it is a fixed component of the complete linear system~$|D|$. To obtain a bound on its coefficient, write $D$ as
\benn
D = \sum_a \irrdivcoeff_a D_a \,,
\eenn
where $D_a$ are pairwise distinct irreducible divisors and $\irrdivcoeff_a \geq 0$. Then
\benn
0 > D\cdot D_i  = \sum_a \irrdivcoeff_a D_a\cdot D_i = \irrdivcoeff_i D_i^2 + (\textrm{non-negative terms}) \geq \irrdivcoeff_i D_i^2 \,,
\eenn
which holds since $D_a\cdot D_i$ cannot be negative when $D_a$ and $D_i$ are distinct, by irreducibility. 
This implies the bound
\be\label{eq:bound}
\irrdivcoeff_i \geq \ceil{\frac{ D\cdot D_i} { D_i^2 }} \,.
\ee
This holds for any irreducible divisor $D_i$ that intersects $D$ negatively. 
Since the bound \eqref{eq:bound} depends on intersection products only, it must apply to any element of $|D|$, hence 
\begin{equation*}
F \,=\!\!\!\!  \sum_{i:\,D\cdot D_i<0} \ceil{\frac{ D_i \cdot D } { D_i^2 }} D_i ~
\end{equation*}
is a fixed component of the complete linear system $|D|$ and does not contribute to the dimension count. Consequently, 
\benn
{\rm dim} |D|={\rm dim} |D-F|~, 
\eenn
or, equivalently, 
\benn
h^0(S, \cO(D)) = h^0(S,\cO(D-F))~.
\eenn

\subsection{Iteration and abutment}
Starting with an effective divisor $D$, the isoparametric transform stops after a finite number of iterations, the resulting class being nef. This is clear since any iteration of the isoparametric transform eliminates a fixed component of $|D|$, of which there are finitely many. The process stops precisely when $\tilde D\cdot D_{\rm eff}\geq 0$ for any effective divisor $D_\mathrm{eff}$, which means $\tilde D$ is nef. This proves Theorem \ref{thm:nef}. 

The limit of the iterated isoparametric transform applied to an effective divisor $D$ will be denoted by $\underline{\tilde D}$. It is clear that the class $[\underline{\tilde D}]$ depends only on the class~$[D]$, hence the iterated isoparametric transform maps effective line bundles to nef line bundles, while preserving the zeroth cohomology dimension. We will use the notation $\underline{\tilde \mcL}$ for the nef line bundle associated with the effective line bundle $\mcL$. 

The iterated isoparametric transform can be practically implemented in the following two cases:
\begin{itemize}
\item[1.] The effective line bundle $\mcL$ comes with a section and the curve decomposition of the corresponding divisor is known.
\item[2.] The surface $S$ carries only finitely many negative curves. In particular, this holds true if the effective cone of $S$ is finitely generated. 
\end{itemize}

In general, several iterations of the isoparametric transform are required to achieve nefness. The cases in which the isoparametric transform abuts after one iteration are interesting as they lead to closed form expressions for cohomology. This is trivially the case of surfaces that contain a single negative curve, such as Hirzebruch surfaces, discussed in Section~\ref{Hirzebruch}. It is also the case for surfaces that contain only negative curves with self-intersection equal to $-1$, such as del Pezzo surfaces. To show this, we need the following results.

\begin{lem}
Let $D$ be an effective divisor on a smooth complex projective surface, and write $\{D_i\}$ for the finite collection of irreducible divisors $D_i$ with $D\cdot D_i < 0$. Then $D_i \cdot \D_{j \neq i} < \mathrm{max}(-D_i^2,-D_j^2)$.
\end{lem}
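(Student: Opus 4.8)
The plan is to reduce the statement to the single quadratic inequality $(D_i\cdot D_j)^2 < (-D_i^2)(-D_j^2)$, from which the claimed bound drops out at once. First I would record that every $D_i$ in the collection is genuinely a component of $D$: writing $D = \sum_a \irrdivcoeff_a D_a$ as in the proof of Theorem~\ref{thm:h0invariance}, the same computation that produced \eqref{eq:bound} gives $0 > D\cdot D_i = \irrdivcoeff_i D_i^2 + (\text{non-negative terms}) \geq \irrdivcoeff_i D_i^2$, so that $D_i^2 < 0$ and consequently $\irrdivcoeff_i > 0$ (if $\irrdivcoeff_i$ vanished, $D\cdot D_i$ would be a sum of non-negative terms). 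In particular, for $i\neq j$ both coefficients $\irrdivcoeff_i,\irrdivcoeff_j$ are strictly positive and $c := D_i\cdot D_j \geq 0$ since $D_i,D_j$ are distinct irreducible curves.

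Next I would extract two linear inequalities from the negativity conditions. Isolating the $D_i$- and $D_j$-terms in the intersection products and discarding the remaining non-negative contributions yields
\[ \irrdivcoeff_j\,(D_i\cdot D_j) < \irrdivcoeff_i\,(-D_i^2), \qquad \irrdivcoeff_i\,(D_i\cdot D_j) < \irrdivcoeff_j\,(-D_j^2), \]
the first from $D\cdot D_i < 0$ and the second from $D\cdot D_j < 0$. All four quantities appearing are non-negative, with the two right-hand sides strictly positive, so the inequalities may legitimately be multiplied; after cancelling $\irrdivcoeff_i\irrdivcoeff_j > 0$ this gives $c^2 < (-D_i^2)(-D_j^2)$.

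Finally I would close with the elementary estimate $(-D_i^2)(-D_j^2) \leq \max(-D_i^2,-D_j^2)^2$, whence $c^2 < \max(-D_i^2,-D_j^2)^2$ and therefore $c < \max(-D_i^2,-D_j^2)$, the strict inequality being preserved because $c\geq 0$ and the right-hand side is positive.

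The step I expect to be the real content is recognising that it is \emph{effectivity} of $D$, and not merely the Hodge index theorem, that drives the estimate. Hodge index alone does not force the $2\times 2$ intersection matrix of two negative curves to be negative definite: two classes of negative self-intersection can perfectly well span a plane containing a class of positive square, in which case $c^2 > (-D_i^2)(-D_j^2)$. It is precisely the strict positivity of $\irrdivcoeff_i,\irrdivcoeff_j$ combined with the two negativity conditions that excludes this and delivers $c^2 < (-D_i^2)(-D_j^2)$. The remainder is bookkeeping, chiefly the check that the cross-term inequalities satisfy the sign hypotheses needed to multiply them — which, incidentally, also absorbs the degenerate case $D_i\cdot D_j = 0$ with no separate argument.
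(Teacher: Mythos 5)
Your proof is correct and starts from the same place as the paper's: the decomposition $D=\beta_iD_i+\beta_jD_j+(\text{effective})$ and the two linear inequalities $\beta_j(D_i\cdot D_j)<\beta_i(-D_i^2)$ and $\beta_i(D_i\cdot D_j)<\beta_j(-D_j^2)$ extracted from $D\cdot D_i<0$ and $D\cdot D_j<0$. The paper finishes by contradiction: it rewrites these as $\beta_i>\bigl(\tfrac{D_i\cdot D_j}{-D_i^2}\bigr)\beta_j$ and $\beta_j>\bigl(\tfrac{D_i\cdot D_j}{-D_j^2}\bigr)\beta_i$ and notes that they are jointly inconsistent unless at least one bracketed factor is less than $1$. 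You instead multiply the two inequalities and cancel $\beta_i\beta_j>0$, arriving at $(D_i\cdot D_j)^2<(-D_i^2)(-D_j^2)$. That route buys a slightly stronger statement --- $D_i\cdot D_j$ is bounded by the geometric mean of $-D_i^2$ and $-D_j^2$, not merely by their maximum --- and it still yields Corollary~\ref{DiDj} immediately. You are also more explicit than the paper about two points both arguments silently need: that $\beta_i,\beta_j>0$ (otherwise the inequalities degenerate), and that the sign conditions on the four factors legitimize multiplying the two inequalities. Your closing observation is accurate as well: the estimate is driven by both curves occurring with positive multiplicity in the effective divisor $D$, not by any negative-definiteness of the pair $\{D_i,D_j\}$, which the Hodge index theorem does not supply.
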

\begin{proof}
Let $D_i$ and $D_j$ be as above and write the curve decomposition of $D$
\benn
D = \irrdivcoeff_i D_i + \irrdivcoeff_j D_j + D_\mathrm{eff.}~,
\eenn
where $D_\mathrm{eff.}$ is an effective divisor and the integers $\irrdivcoeff_i, \irrdivcoeff_j \geq 0$. Then
\benn
\begin{aligned}
0 > D \cdot D_i &= \irrdivcoeff_i(D_i^2) + \irrdivcoeff_j (D_i \cdot D_j) + (\textrm{non-negative terms}) \,, \\
0 > D \cdot D_j &= \irrdivcoeff_i (D_i \cdot D_j) + \irrdivcoeff_j(D_j^2) + (\textrm{non-negative terms}) \,.
\end{aligned}
\eenn
Hence
\benn
\begin{aligned}
0 &> \irrdivcoeff_i(D_i^2) + \irrdivcoeff_j (D_i \cdot D_j) \quad \Rightarrow \quad \irrdivcoeff_i > \left(\frac{D_i \cdot D_j}{-D_i^2}\right)\irrdivcoeff_j \,, \\
0 &> \irrdivcoeff_i (D_i \cdot D_j) + \irrdivcoeff_j(D_j^2) \quad \Rightarrow \quad \irrdivcoeff_j > \left(\frac{D_i \cdot D_j}{-D_j^2}\right)\irrdivcoeff_i \,.
\end{aligned}
\eenn
But these are inconsistent unless at least one of the bracketed factors is less than one, which proves the theorem.
\end{proof}

\begin{crl}\label{DiDj}
Under the same conditions as the lemma, if there are no irreducible divisors with self-intersection $<-1$, then $D_i \cdot D_{j} = -\delta_{ij}$.
\label{crl:zeroints}
\end{crl}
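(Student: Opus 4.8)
The plan is to combine the just-proved Lemma with the standing hypothesis to pin down every relevant intersection number. First I would record that each divisor $D_i$ in the collection necessarily has strictly negative self-intersection. Indeed, for an irreducible curve $D_i$ to meet the effective divisor $D$ negatively it must occur as a component of $D$, and, exactly as in the proof of Theorem~\ref{thm:h0invariance}, writing $D = \sum_a \irrdivcoeff_a D_a$ gives $0 > D\cdot D_i = \irrdivcoeff_i D_i^2 + (\text{non-negative terms}) \geq \irrdivcoeff_i D_i^2$, which forces $D_i^2 < 0$. Under the hypothesis that no irreducible divisor on $S$ has self-intersection $<-1$, the only remaining possibility is $D_i^2 = -1$. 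This already settles the diagonal case $i=j$, where $D_i\cdot D_j = D_i^2 = -1 = -\delta_{ii}$.

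Next, for $i\neq j$, I would feed these self-intersection values into the Lemma. Since $-D_i^2 = -D_j^2 = 1$, the Lemma yields $D_i\cdot D_{j} < \mathrm{max}(-D_i^2,-D_j^2) = 1$. Because $D_i$ and $D_j$ are distinct irreducible curves, their intersection number is a non-negative integer, so the chain of inequalities $0 \leq D_i\cdot D_j < 1$ collapses to $D_i\cdot D_j = 0 = -\delta_{ij}$. Combining the two cases gives $D_i\cdot D_j = -\delta_{ij}$ in all cases, which is the claim.

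I do not anticipate a genuine obstacle here, since the corollary is essentially a specialization of the Lemma to the value $D_i^2=-1$. The only two points that require care, and which I would make explicit, are (i) the observation that each $D_i$ has $D_i^2 < 0$, so that the "no self-intersection below $-1$" hypothesis pins it to exactly $-1$; and (ii) the integrality and non-negativity of the intersection pairing of distinct irreducible curves, which is what allows the strict bound $D_i\cdot D_j < 1$ to be upgraded to the equality $D_i\cdot D_j = 0$.
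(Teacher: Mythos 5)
Your proof is correct and is precisely the argument the paper leaves implicit (its proof reads only ``This is immediate''): pin each $D_i^2$ to $-1$ via effectivity of $D$, then apply the Lemma together with integrality and non-negativity of intersections of distinct irreducible curves. No discrepancy with the paper's approach.
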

\begin{proof}
This is immediate.
\end{proof}

\begin{thm}\label{Thm:oneStep}
On smooth projective surfaces containing no negative curves with self-intersection $<-1$ the isoparametric transform applied to an effective divisor abuts after at most one application. 
\end{thm}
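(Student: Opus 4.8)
The plan is to show directly that a single application of the transform lands in the nef cone. By Corollary~\ref{crl:zeroints}, the hypothesis forces the relevant negative curves to be mutually disjoint $(-1)$-curves, so $D_i^2=-1$ and $D_i\cdot D_j=-\delta_{ij}$. The ceiling in Definition~\ref{def:itransform} then collapses to $-D\cdot D_i$, giving $\tilde D=D+\sum_i(D\cdot D_i)D_i$, which is effective. Since, by Theorem~\ref{thm:nef}, abutment is equivalent to nefness, it suffices to prove that no irreducible curve meets $\tilde D$ negatively. As $\tilde D$ is effective, any curve $C$ with $\tilde D\cdot C<0$ must be an irreducible component of $\tilde D$ with $C^2<0$, hence a $(-1)$-curve by hypothesis. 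The first, easy step is to dispose of the curves already in the collection: using disjointness, $\tilde D\cdot D_j=D\cdot D_j+(D\cdot D_j)D_j^2=0$, so these never obstruct nefness.

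The substance of the argument concerns a curve $C$ that is a $(-1)$-curve but does \emph{not} lie in $\{D_i\}$; for such a curve $D\cdot C\ge 0$ by definition of the collection. The danger is that subtracting the fixed part $F$ could turn a previously non-negative intersection negative, i.e.\ create a \emph{new} negative curve and force a second iteration. To rule this out I would introduce the auxiliary class
\benn
C'=C+\sum_i(C\cdot D_i)\,D_i~,
\eenn
chosen so that $\tilde D\cdot C=D\cdot C'$ and $C'\cdot D_j=0$ for every $j$. A short computation then gives $C'^2=C^2+\sum_i(C\cdot D_i)^2=-1+\sum_i(C\cdot D_i)^2$.

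The key step — and the main obstacle — is to deduce $D\cdot C'\ge 0$, which is exactly the statement $\tilde D\cdot C\ge 0$. Conceptually this is transparent: the $D_i$ are disjoint $(-1)$-curves, so Castelnuovo's criterion contracts them by a morphism $\pi\colon S\to S'$ under which $C'=\pi^{*}\bar C$ for the irreducible image $\bar C=\pi(C)$, with $\bar C^{2}=C'^2\ge 0$ whenever $C$ meets some $D_i$; an irreducible curve of non-negative self-intersection is nef, whence $D\cdot C'=\pi_{*}D\cdot\bar C\ge 0$. To keep the proof self-contained I would instead argue purely by intersection theory: writing $D=\beta C+\sum_i\beta_i D_i+R$ with $R$ effective containing neither $C$ nor any $D_i$ and $\beta\ge 1$, the relations $C'\cdot D_i=0$ and $C'\cdot C=C'^2$ yield
\benn
D\cdot C'=\beta\,C'^2+R\cdot C'~,
\eenn
and both terms are non-negative, since $C'^2\ge 0$ in the nontrivial case that $C$ meets some $D_i$, while $R\cdot C'=R\cdot C+\sum_i(C\cdot D_i)(R\cdot D_i)\ge 0$. (If $C$ meets no $D_i$ then $C'=C$ and $\tilde D\cdot C=D\cdot C\ge 0$ directly.) This contradicts $\tilde D\cdot C<0$ and finishes the proof. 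The one point requiring genuine care is precisely the sign of $C'^2$: it is the passage from $C^2=-1$ to $C'^2\ge 0$, driven by $C$ meeting the contracted curves, that prevents the creation of new negative curves and thereby guarantees abutment in a single step.
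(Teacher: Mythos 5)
Your proof is correct, and it ends at essentially the same inequality as the paper's, but it is organized around a genuinely different device. The paper decomposes $D = \sum_i\beta_iD_i + \sum_\alpha\beta_\alpha D_\alpha + \sum_\mu\beta_\mu D_\mu$, recollects the coefficient of each $D_i$ in $\tilde D$ as $\sum_\alpha\beta_\alpha(D_i\cdot D_\alpha)+\sum_\mu\beta_\mu(D_i\cdot D_\mu)$, and observes that a single $i$ with $D_i\cdot D_\alpha>0$ already contributes $\beta_\alpha(D_i\cdot D_\alpha)^2\geq\beta_\alpha$, cancelling the lone negative term $\beta_\alpha D_\alpha^2=-\beta_\alpha$. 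You instead introduce the auxiliary class $C'=C+\sum_i(C\cdot D_i)D_i$ --- the total transform of $\pi(C)$ under the Castelnuovo contraction of the mutually disjoint $(-1)$-curves $D_i$ --- which is orthogonal to every $D_i$ and has $C'^2=-1+\sum_i(C\cdot D_i)^2\geq 0$ whenever $C$ meets some $D_i$; the identity $\tilde D\cdot C=D\cdot C'=\beta C'^2+R\cdot C'$ then finishes the argument. The arithmetic content is the same (your $\beta\,C'^2$ is the paper's $\beta_\alpha(D_i\cdot D_\alpha)^2-\beta_\alpha$, summed over all $i$ rather than truncated to one), but your packaging explains \emph{why} the cancellation happens, whereas in the paper it reads as a fortunate coincidence of coefficients. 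Two further small differences in your favour: you make explicit the standard reduction that any $C$ with $\tilde D\cdot C<0$ must be a negative irreducible component of the effective divisor $\tilde D$ (the paper leaves this implicit in its unexplained assertion $\tilde D\cdot D_\mu\geq 0$), and your argument covers all dangerous curves uniformly rather than splitting into the cases ``meets one $D_i$'' versus ``meets none'' only at the very end.
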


\begin{proof}

As $D$ is effective, write its curve decomposition as
\benn
D = \sum_i \irrdivcoeff_i D_i + \sum_\alpha \irrdivcoeff_\alpha D_\alpha + \sum_\mu \irrdivcoeff_\mu D_\mu \,,
\eenn
where $i,\alpha,\mu$ index, respectively, the irreducible divisors such that $D \cdot D_i < 0$, those such that $D_\alpha^2 < 0$ but $D \cdot D_\alpha \geq 0$, and the remainder. As usual, denote by $\tilde D$ the isoparametric transform of $D$,
\benn
\tilde D = D \, + \sum_{i} (D_i \cdot D)\,  D_i ~.
\eenn
To show that $\tilde D$ is nef, first note that  $\shfdiv{D} \cdot D_\mu \geq 0$. Furthermore, note that
\benn
\shfdiv{D} \cdot D_j = D\cdot D_i + \sum_{i} (D_i \cdot D)\,  (D_i\cdot D_j) = 0
\eenn
by Corollary~\ref{DiDj}. 
It remains only to show that $\shfdiv{D} \cdot D_\alpha \geq 0$. Collect the terms proportional to $D_i$ in $\shfdiv{D}$
\benn
\begin{aligned}
\shfdiv{D} & = \sum_i\left(\irrdivcoeff_i +  D_i \cdot D \right) D_i + \sum_\alpha \irrdivcoeff_\alpha D_\alpha + \sum_\mu \irrdivcoeff_\mu D_\mu ~\\
& = \sum_i \left(\sum_\alpha\irrdivcoeff_\alpha(D_i\cdot D_\alpha) +\sum_\mu\irrdivcoeff_\mu(D_i\cdot D_\mu)\right) D_i + \sum_\alpha \irrdivcoeff_\alpha D_\alpha + \sum_\mu \irrdivcoeff_\mu D_\mu  \,,
\end{aligned}
\eenn
where we have used again the relation $D_i \cdot D_{j} = -\delta_{ij}$.
The only negative contribution in $\shfdiv{D} \cdot D_\alpha$ is $\irrdivcoeff_\alpha D_\alpha^2=-\irrdivcoeff_\alpha$. Now, if $D_i \cdot D_\alpha = 0$ for all $i$, then $\shfdiv{D} \cdot D_\alpha = D \cdot D_\alpha\geq 0$ from the definition of $\tilde D$. So assume there is at least one $i$ such that $D_i \cdot D_\alpha > 0$. But then from the above expression for $\tilde D$ it follows that
\benn
\shfdiv{D} \cdot D_\alpha = \irrdivcoeff_\alpha(D_i \cdot D_\alpha)^2 - \irrdivcoeff_\alpha + \text{ non-negative terms } \geq 0 \,.
\eenn
Hence $\shfdiv{D}$ is in the nef cone. 
\end{proof}

\subsection{Topological formulae for the zeroth cohomology}
The isoparametric transform reduces the problem of computing $h^0(S,\mcL)$ for an arbitrary effective line bundle $\mcL=\cO(D)$ to the treatment of nef line bundles alone. 

In the situations when all higher cohomologies $h^{i>0}(S,\mc{O}(\underline{\tilde D})$ vanish, the zeroth cohomology dimension of $\cO(\underline{\tilde D})$ and hence $h^0(S,\cO(D))$ can be computed as a topological index,
\be\label{IndexFormula2}
h^0(S,\cO(D)) = \chi(S,\cO(\underline{\tilde D}))~.
\ee

For certain surfaces this gives a topological formula for the dimension of the zeroth cohomology throughout the entire effective cone. As stated in Corollary~\ref{crl:KV}, this is always the case for surfaces with nef and big anti-canonical divisor class. 

\medskip\noindent\textbf{Proof of Corollary~\ref{crl:KV}.} 
Let $D$ be an effective divisor and $\underline{\tilde D}$ its isoparametric transform limit. The Kawamata-Viehweg vanishing theorem applies to  $\underline{\tilde D}$, in the sense that all higher cohomologies $h^{i>0}(S,\mc{O}(\underline{\tilde D}))$ vanish, if $\underline{\tilde D}-K_S$ is nef and big. 

In particular, if $K_S$ is nef and big, then nefness of $\underline{\tilde D} - K_S$ is immediate, since $\underline{\tilde D}$ is always nef. 
Recalling that a nef divisor is big if and only if its top self-intersection is strictly positive, bigness of $\underline{\tilde D} - K_S$ follows from
\benn
(\shfdiv{D}-K_S)^2 = \shfdiv{D}^2 + 2\shfdiv{D}\cdot(-K_S) + (-K_S)^2 >0 \,,
\eenn
since $(-K_S)^2>0$, as $-K_S$ is nef and big.
\qed

\subsection{Back-engineering the negative curves.} So far we have discussed the isoparametric transform as a means of procuring topological formulae for cohomology. The other aspect of the isoparametric transform is that, given (at least some partial) knowledge of line bundle cohomology on a given surface, the negative curves on that surface can be worked out.

\section{Calculations on del Pezzo surfaces}\label{sec:Examples}

\begin{thm}
\label{thm:dpcohom}
On a del Pezzo surface, the zeroth cohomology of a line bundle $\mc{O}(D)$ associated to an effective divisor $D$ is given by
\benn
h^0\left(\mc{O}(D)\right) = \ind\bigg(D + \sum_{i}(D\cdot D_i)D_i \bigg) ~.
\eenn
where the index $i$ runs over the set of all irreducible negative divisors such that $D \cdot D_i < 0$. 
\end{thm}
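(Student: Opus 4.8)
The plan is to assemble the machinery already developed---$h^0$-invariance of the isoparametric transform (Theorem~\ref{thm:h0invariance}), one-step abutment on surfaces free of curves with self-intersection below $-1$ (Theorem~\ref{Thm:oneStep}), and Kawamata--Viehweg vanishing via Corollary~\ref{crl:KV})---and to verify that a del Pezzo surface satisfies the hypotheses of each. The one structural input not yet recorded in the excerpt is that on a del Pezzo surface every irreducible negative curve is a $(-1)$-curve; establishing this is really the only place where the del Pezzo hypothesis genuinely enters, and it is the step I expect to carry the argument.

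First I would prove this negative-curve classification. Let $C$ be an irreducible curve with $C^2<0$. Adjunction gives $2g(C)-2 = C^2 + C\cdot K_S$, equivalently $C\cdot(-K_S) = C^2 + 2 - 2g(C)$. Since $-K_S$ is ample, $C\cdot(-K_S)\geq 1$, whence $C^2 \geq 2g(C)-1\geq -1$; as $C^2$ is a negative integer this forces $C^2=-1$ and $g(C)=0$. Thus a del Pezzo surface carries no irreducible curve of self-intersection $<-1$, so Corollary~\ref{DiDj} applies and the relevant negative curves satisfy $D_i\cdot D_j=-\delta_{ij}$.

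Next I would compute the transform explicitly. With $D_i^2=-1$ the ceiling in Definition~\ref{def:itransform} collapses, $\ceil{(-D_i\cdot D)/(-D_i^2)}=-D\cdot D_i$ (an integer, since $D\cdot D_i<0$), so that
\benn
\tilde D = D + \sum_{i:\,D\cdot D_i<0}(D\cdot D_i)\,D_i \,.
\eenn
By Theorem~\ref{Thm:oneStep} the transform abuts after this single application, hence $\underline{\tilde D}=\tilde D$ is nef, and Theorem~\ref{thm:h0invariance} yields $h^0(\cO(D)) = h^0(\cO(\underline{\tilde D}))$.

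Finally I would invoke vanishing. Because $-K_S$ is ample it is in particular nef and big, so by Corollary~\ref{crl:KV} the divisor $\underline{\tilde D}-K_S$ is nef and big, and Theorem~\ref{thm:kawamataviehweg} kills $h^{i>0}(\cO(\underline{\tilde D}))$. Consequently $h^0(\cO(\underline{\tilde D}))=\chi(\cO(\underline{\tilde D}))$, and substituting the expression for $\underline{\tilde D}$ gives the asserted formula. The only genuine obstacle is the classification in the second paragraph; once it is in place the statement is a direct composition of the earlier results, requiring no further estimates.
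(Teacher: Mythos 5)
Your proposal is correct and follows the same route as the paper: Corollary~\ref{crl:KV} (via ampleness of $-K_S$) gives the index formula, and the absence of curves of self-intersection $<-1$ plus Theorem~\ref{Thm:oneStep} gives one-step abutment with the ceiling collapsing to $-D\cdot D_i$. The only difference is that you supply the adjunction argument classifying the negative curves as $(-1)$-curves, a standard fact the paper simply asserts.
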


\begin{proof}
Since the anti-canonical divisor of a del Pezzo surface is ample,  Corollary~\ref{crl:KV} implies that the zeroth cohomology of any effective line bundle can be computed using the topological formula~\eqref{IndexFormula}. Moreover, del Pezzo surfaces contain no curves with self-intersection less than $-1$, hence Theorem \ref{Thm:oneStep} implies that the isoparametric transform abuts after one iteration. 
\end{proof}

\vspace{12pt}
Note that the list of exceptional curves on a del Pezzo surface is well-known and short, so in practice it is easy to compute the isoparametric transform. Additionally, the list of generators of the nef cone is also well-known and relatively short, so since this cone is dual to the Mori cone, effectiveness of divisors can easily be checked using intersection properties.

\section{Hirzebruch Surfaces}\label{Hirzebruch}

As a particular example of compact toric surfaces, consider the Hirzebruch surface $\mbb{F}_{n>0}$ containing a single negative irreducible divisor, $D_{-n}^2=-n$.  

\begin{thm}
On the Hirzebruch surface $\mbb{F}_n$, the zeroth cohomology of the line bundle $\mc{O}(D)$ associated to an effective divisor $D$ is given by
\be
h^0\left(\mc{O}(D)\right) = \ind\bigg(D - \theta(-D \cdot D_{-n})\,\ceil{\frac{D\cdot D_{-n}}{-n}}D_{-n} \bigg) \,,
\ee
where $\theta$ is the Heaviside function.
\end{thm}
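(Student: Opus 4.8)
The plan is to combine the Demazure vanishing corollary with an explicit computation of the isoparametric transform on $\mbb{F}_n$. Since $\mbb{F}_n$ is a compact toric surface, its fan has convex support, so by the corollary to Theorem~\ref{thm:demazure} the topological formula $h^0(\cO(D)) = \ind(\cO(\underline{\shfdiv{D}}))$ holds for every effective divisor $D$. It therefore suffices to identify the abutment $\underline{\shfdiv{D}}$ explicitly and to check that it equals the divisor appearing on the right-hand side of the claimed formula.

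First I would record the intersection structure of $\mbb{F}_n$. Its Picard lattice has rank two, generated by a fiber class $f$ and the negative section $D_{-n}$, with $f^2 = 0$, $D_{-n}\cdot f = 1$ and $D_{-n}^2 = -n$. The cone of curves is generated by $f$ and $D_{-n}$, so a divisor is nef precisely when it meets both of these generators non-negatively. Writing an effective divisor as $D = a\,D_{-n} + b\,f$ with $a,b\geq 0$, one computes $D\cdot D_{-n} = b - na$ and $D\cdot f = a$.

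Next I would evaluate the isoparametric transform directly from Definition~\ref{def:itransform}. As $D_{-n}$ is the only irreducible negative curve, there are two cases. If $D\cdot D_{-n}\geq 0$ the collection $\{D_i\}$ is empty, the transform is trivial, $\shfdiv{D}=D$, and $D$ is already nef since $D\cdot f = a\geq 0$. If $D\cdot D_{-n}<0$ the transform subtracts a single term, $\shfdiv{D} = D - \ceil{\frac{-D\cdot D_{-n}}{n}}\,D_{-n}$. The Heaviside factor $\theta(-D\cdot D_{-n})$ in the statement is precisely what switches off this subtraction in the first case (where $\ceil{\frac{-D\cdot D_{-n}}{n}}$ would otherwise be non-positive), unifying the two cases into one expression.

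The main step, and the one requiring care, is to show that a single application already lands in the nef cone, so that $\underline{\shfdiv{D}}=\shfdiv{D}$. This cannot be read off from Theorem~\ref{Thm:oneStep}, since for $n\geq 2$ the curve $D_{-n}$ has self-intersection $-n<-1$; instead I would verify nefness by intersecting against the two generators of the cone of curves. Setting $k = \ceil{\frac{-D\cdot D_{-n}}{n}}$, the defining inequality of the ceiling gives $kn \geq -D\cdot D_{-n}$, hence $\shfdiv{D}\cdot D_{-n} = D\cdot D_{-n} + kn \geq 0$. For the fiber, $k = \ceil{a - b/n} = a - \lfloor b/n\rfloor$, so $\shfdiv{D}\cdot f = a - k = \lfloor b/n\rfloor \geq 0$. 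Thus $\shfdiv{D}$ meets both generators non-negatively and is nef, the transform abuts after one step, and $\underline{\shfdiv{D}} = \shfdiv{D}$. Feeding this into the Demazure formula yields the stated expression.
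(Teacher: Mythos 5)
Your proposal is correct and follows essentially the same route as the paper: apply the Demazure vanishing corollary and show the isoparametric transform abuts after one step by verifying $\shfdiv{D}\cdot D_{-n}\geq 0$ from the ceiling inequality. Your additional explicit check that $\shfdiv{D}\cdot f=\lfloor b/n\rfloor\geq 0$ is a welcome extra detail the paper leaves implicit (there it follows from the general fact that an effective divisor can only meet negative curves negatively, and $D_{-n}$ is the unique one).
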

\begin{proof}
If $D$ is nef, by Demazure's vanishing theorem $h^0\left(\mc{O}(D)\right) = \chi(D)$. If $D$ is effective but not nef, $D\cdot D_{-n}<0$ and the isoparametric transform abuts after one iteration, since
\benn
\left(D - \ceil{\frac{D\cdot D_{-n}}{D_{-n}^2}}D_{-n} \right) \cdot D_{-n} = D \cdot D_{-n} - \ceil{\frac{D\cdot D_{-n}}{D_{-n}^2}}(D_{-n}^2) \geq 0 \,.
\eenn
The result is nef, and the Demazure vanishing theorem gives the result.
\end{proof}

\bibliographystyle{halphanum}
\bibliography{references}

\begin{thebibliography}{GSY2}

\bibitem[BCDL1]{BCDL1} C.~Brodie, A.~Constantin, R.~Deen and A.~Lukas, \textit{Index Formulae for Line Bundle Cohomology on Complex Surfaces}, Fortsch.~Phys.~68 (2020) 1900086, \href{https://arxiv.org/abs/1906.08769}{arXiv:1906.08769}. 

\bibitem[BCDL2]{BCDL2} C.~Brodie, A.~Constantin, R.~Deen and A.~Lukas, \textit{Machine Learning Line Bundle Cohomology}. Fortsch.~Phys.~68 (2020) 1900087, \href{https://arxiv.org/abs/1906.08730}{arXiv:1906.08730}.

\bibitem[BCL]{BCL} E.~Buchbinder, A.~Constantin, and A.~Lukas, \textit{The Moduli Space of Heterotic Line Bundle Models: a Case Study for the Tetra-Quadric}, JHEP 1403 (2014) 025,  \href{https://arxiv.org/abs/1311.1941}{arXiv:1311.1941}. 

\bibitem[CL]{CL} A.~Constantin, and A.~Lukas, \textit{Formulae for Line Bundle Cohomology on Calabi-Yau Threfolds}. \href{https://arxiv.org/abs/1808.09992}{arXiv:1808.09992}.

\bibitem[CLS]{CLS}  D. Cox, J. Little and H. Schenck, \textit{Toric Varieties}, Graduate Studies in Mathematics. American Mathematical Society (2011).

\bibitem[Co]{Co} A.~Constantin, \textit{Heterotic String Models on Smooth Calabi-Yau Threefolds}, DPhil thesis, University of Oxford (2013). \href{https://arxiv.org/abs/1808.09993}{arXiv:1808.09993}.

\bibitem[KS]{KS} D.~Klaewer and L.~Schlechter, \textit{Machine Learning Line Bundle Cohomologies of
Hypersurfaces in Toric Varieties}, Phys.Lett. B789 (2019) 438-443. \href{http://arxiv.org/abs/1809.02547}{arXiv:1809.02547}.

\bibitem[LS]{LS} M.~Larfors and R.~Schneider, \textit{Line bundle cohomologies on CICYs with Picard number two}, Fortsch.~Phys.~67 (2019) 1900083 \href{http://arxiv.org/abs/1906.00392}{arXiv:1906.00392}.

\bibitem[La]{La} R. Lazarsfeld, \textit{Positivity in algebraic geometry}, Springer, Berlin (2004).

\end{thebibliography}

\begin{multicols}{2}
\bigskip \noindent {\tt{andrei.constantin@physics.ox.ac.uk}} \\
\bigskip \noindent {\tt{rehan.deen@balliol.ox.ac.uk}} \medskip\\[-17pt]
\bigskip \noindent {\tt{lukas@physics.ox.ac.uk}} \medskip

\noindent Rudolf Peierls Centre for Theoretical Physics \\
\noindent University of Oxford \\
Oxford OX1 3PU \\
UK
\\\\\\\\
\bigskip \noindent {\tt{callum.brodie@ipht.fr}} \\

\noindent Institut de Physique Th\'{e}orique, \\
Universit\'{e} Paris Saclay, CEA, CNRS \\ 
\noindent Orme des Merisiers, 91191 Gif-sur-Yvette CEDEX\\
\noindent France \\

\end{multicols}

\end{document}